\documentclass[a4paper, reqno]{amsart}
\usepackage[dvipsnames]{xcolor}
\usepackage{amsmath,amsthm,amssymb,relsize,mathrsfs,csquotes,microtype}
\usepackage[margin=30mm]{geometry}

\usepackage{enumitem}
\setlist{nosep,font=\normalfont,beginpenalty=10000, midpenalty=10000}

\definecolor{darkblue}{rgb}{0,0,0.6}

\usepackage[breaklinks, pdftex, ocgcolorlinks,colorlinks=true, citecolor=darkblue, filecolor=darkblue, linkcolor=darkblue, urlcolor=darkblue]{hyperref}

\makeatletter
\newtheorem*{rep@theorem}{\rep@title}
\newcommand{\newreptheorem}[2]{%
	\newenvironment{rep#1}[1]{%
		\def\rep@title{#2 \ref{##1}}%
		\begin{rep@theorem}}%
		{\end{rep@theorem}}}
\makeatother

\newtheorem{proposition}{Proposition}[section]
\newtheorem{theorem}[proposition]{Theorem}
\newtheorem{corollary}[proposition]{Corollary}
\newtheorem{lemma}[proposition]{Lemma}
\newreptheorem{theorem}{Theorem}
\newreptheorem{proposition}{Proposition}

\theoremstyle{definition}
\newtheorem{definition}[proposition]{Definition}
\newtheorem{question}[proposition]{Question}

\theoremstyle{remark}

\newtheorem*{remark*}{Remark}

\newcommand{\N}{\mathbb{N}}

\newcommand{\R}{\mathbb{R}}

\newcommand{\ol}{\overline}

\newcommand{\wt}{\widetilde}

\newcommand{\CP}{\mathbb{C}P}

\def\op{\operatorname}
\newcommand{\comm}[2]{%
  {\color{red}#1: #2}}

\newcommand{\defeq}{\mathrel{\mathop:}=}

\newcommand{\sumStwo}[1]{\ensuremath{\#_{#1} (S^2 \times S^2)}}

\newcommand{\ttimes}{\mathbin{%
    \ooalign{\raise1.15ex\hbox{$\scriptstyle\sim$}\cr\hidewidth$\times$\hidewidth\cr}%
    }}

\renewcommand{\~}{\nobreak\ }

\begin{document}

\title{The unsolvability of the homeomorphism problem}

\author{Stefan Friedl}
\author{Tobias Hirsch}
\author{Marc Kegel}

\address{Fakult\"at f\"ur Mathematik, Universit\"at Regensburg, Germany}
\email{sfriedl@gmail.com}
\email{tobias.hirsch@ur.de}

\address{Universidad de Sevilla, Dpto.\ de Álgebra,
Avda.\ Reina Mercedes s/n,
41012 Sevilla, Spain}
\email{kegelmarc87@gmail.com}

\def\subjclassname{\textup{2020} Mathematics Subject Classification}
\expandafter\let\csname subjclassname@1991\endcsname=\subjclassname

\subjclass{
57-08; 
03D80, 
57K40. 
}
\keywords{Homeomorphism problem, algorithms, Kirby diagrams, 4-manifolds}

\date{\today} 

\begin{abstract}
In this short expository note, we give a detailed proof of Markov’s theorem on the unsolvability of the homeomorphism problem and of the existence of unrecognizable manifolds in all dimensions larger than $3$. 
\end{abstract}

\maketitle





\section{Introduction}
\addtocontents{toc}{\protect\setcounter{tocdepth}{1}}

As a corollary of the resolution of the geometrization conjecture\~\cite{perelman1,perelman2} the homeomorphism and diffeomorphism problem is solved for closed orientable manifolds of dimension less than or equal to\~$3$, see for example\~\cite{Kuperberg}. 
On the contrary, a well-known theorem due to Markov\~\cite{Markov} from 1958 states that the homeomorphism problem for manifolds of dimension larger than $3$ is not solvable. For different write-ups of this result we refer to\~\cite{Stanko, Chernavsky,Kirby_Markovsthm,Gordon,Tancer,BooneHakenPoenaru} and\~\cite[Exercises\~5.1.10(c),\~5.2.2(c)]{GS}. The main idea in all these proofs and related results is to produce a solution to an undecidable problem in group theory from such a potential algorithm. 

In this expository article, we first make these undecidability statements precise and then present detailed proofs of them.
The input data of an algorithm is supposed to be a finite set. In our setting, we will work with PL-simplicial manifolds, as defined in Definition\~\ref{def:combinatorial-manifold}. (Note that these objects go by different names, at times they are also called combinatorial manifolds or stellar manifolds, see e.g.\ \cite{Glaser}.) 
Given a PL-simplicial manifold $M$, we can also consider the corresponding topological realization $|M|$ which comes with a natural PL-structure. Note that two PL-simplicial manifolds are combinatorially homeomorphic (i.e.\ stellar equivalent) if and only if the PL-manifolds given by their topological realizations are PL-homeomorphic.
%

With this background on PL-manifolds we can precisely state the unsolvability theorems.

\begin{theorem}
\label{thm:main_PL_input_and_output}
For all integers $n\geq4$ there exists no algorithm that 
\begin{itemize}
    \item takes as input connected closed orientable $n$-dimensional PL-simplicial manifolds  $M_1$ and $M_2$, and 
    \item outputs whether or not $|M_1|$ is PL-homeomorphic to $|M_2|$.
\end{itemize}
\end{theorem}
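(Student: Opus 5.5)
We argue by contradiction and reduce an undecidable group-theoretic problem to the homeomorphism problem. The input will be the \emph{triviality problem}, which follows from the Adian--Rabin theorem: there is no algorithm that takes a finite presentation $P=\langle x_1,\dots,x_k \mid r_1,\dots,r_l\rangle$ and decides whether the group it presents is trivial. In fact we use a slightly stronger form. There is a recursively enumerable family of presentations, each having the same number of generators and relators (after padding), whose triviality cannot be decided. Padding is done by adding trivial relators or cancelling generator--relator pairs, so we may assume $l\ge k$, or even $k=l$ with balanced presentations. Suppose an algorithm $\mathcal{A}$ as in Theorem~\ref{thm:main_PL_input_and_output} exists. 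From $P$ we will effectively build a closed orientable PL-simplicial $n$-manifold $M_P$ with $\pi_1(|M_P|)\cong G_P$. We will also arrange that $|M_P|$ is PL-homeomorphic to a fixed standard manifold $N$ exactly when $G_P$ is trivial. Feeding $(M_P,N)$ to $\mathcal{A}$ then decides triviality, which is a contradiction.

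The construction for $n\ge 5$ runs as follows.
\begin{enumerate}
\item Start with $\natural_k(S^1\times D^{n-1})$, the boundary connected sum of $k$ copies of $S^1\times D^{n-1}$. Its boundary is $\#_k(S^1\times S^{n-2})$.
\item For each relator $r_j$, attach a $2$-handle along an embedded circle representing $r_j$. Since $n\ge4$, circles in the $(n-1)$-dimensional boundary can be isotoped to be disjoint and embedded.
\item Take the double of the resulting $n$-dimensional handlebody $W_P$ to get $M_P=DW_P$.
\end{enumerate}
$M_P$ is closed and orientable. By van Kampen, and because the dual handles of index $\ge n-2\ge 3$ do not affect $\pi_1$, we get $\pi_1(M_P)\cong G_P$. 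If $G_P$ is trivial, one wants $M_P\cong_{PL}$ a standard manifold. For this, choose a special family of presentations, as in the literature: presentations where adding the relator "trivial" can be realized by handle slides, or where $P$ is trivial iff it is Andrews--Curtis-like equivalent to the empty one in a stabilized sense. A cleaner route is to note that for $n\ge5$ the double $DW$ is determined by $W$ up to stabilization. If $\pi_1(W_P)=1$, then $W_P$ is a contractible-modulo-$2$-handles handlebody, $DW_P$ is a simply connected manifold, and its homology is computable from the presentation matrix. With $k=l$ balanced and $G_P$ trivial, $W_P$ is contractible, so $\partial W_P$ is a homotopy sphere, and by the h-cobordism theorem ($n\ge 6$) $W_P\cong D^n$ and $DW_P\cong S^n$. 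So $N=S^n$ works, and conversely $DW_P\cong S^n$ forces $G_P=1$. One can reach balanced presentations by appending a generator and a relator that do not change the group; the triviality problem remains undecidable for balanced presentations by Adian--Rabin applied to a balanced family. For $n=5$ and $n=4$ the h-cobordism theorem is unavailable. Here I would instead take $M_P=\partial(W_P\times I)$, or use connected sums with $S^2\times S^2$ in dimension $4$, following Markov. The claim to prove is that when $G_P=1$ one gets $\#_m(S^2\times S^{2})$ for a computable $m$, via handle-slide arguments that realize the trivializing sequence of Tietze moves once we stabilize.

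Finally, the whole construction must be made effective as PL-simplicial manifolds. We need explicit triangulations of the handles, of the attaching circles, and of the double, all computed from $P$, together with a fixed triangulation of $N$. This is routine but tedious.

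The main obstacle is dimension $4$, and to a lesser degree $5$: showing that trivial $G_P$ really forces a fixed standard manifold. I would handle it by Markov's trick. Use presentations from a family where $G_P$ trivial is witnessed by a finite sequence of Tietze moves. Tietze moves correspond to handle slides and $1$/$2$-handle cancellations in $W_P\times I$, or after stabilization in $M_P\#_m(S^2\times S^2)$. Conversely, non-trivial $\pi_1$ obstructs the homeomorphism. Enumerating $m$ is not needed if we instead fix $N$ as the output for some computable $m$ derived from $k,l$, using the fact that Tietze moves preserve the Euler characteristic of the double.
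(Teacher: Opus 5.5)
There is a genuine gap, and it sits exactly where your reduction relies on group theory.

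\textbf{The balanced-presentation step fails.} Your argument for $n\ge 6$ needs the triviality problem to be undecidable for \emph{balanced} presentations, and neither justification you give works.
\begin{itemize}
\item Appending a generator $y$ together with the relator $y$ leaves the deficiency $k-l$ unchanged, and adding trivial relators only decreases it. Every presentation of the trivial group has $l\ge k$. So a presentation with $l>k$, which is the usual output of Adian--Rabin type constructions, can never be padded to a balanced presentation of the same group.
\item There is also no Adian--Rabin theorem for balanced families to fall back on. Decidability of triviality for balanced presentations is, as far as I know, a well-known open problem, closely tied to the open recognition problem for $S^4$.
\end{itemize}
So the target $N=S^n$ is not available.

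\textbf{Dimensions $4$ and $5$ are not covered.} For $n=4,5$ you give no actual argument, and the sketch is wrong in its details. Handle slides and $1$/$2$-handle cancellations realize only Andrews--Curtis moves, not general Tietze moves. Adding a consequence relator is a stabilization, not a handle move, and it changes the Euler characteristic rather than preserving it.

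\textbf{The missing idea.} The paper's construction works uniformly for all $n\ge4$ using only the plain triviality problem.
\begin{itemize}
\item Build an $(n+1)$-dimensional handlebody $W$ that PL-embeds in $\R^{n+1}$. It has a $0$-handle, $k$ $1$-handles, $l$ $2$-handles for the relators, and then $k$ \emph{additional} $2$-handles attached arbitrarily.
\item Take $M_P=\partial W$. Then $\pi_1(M_P)\cong G_P$, because all handles have codimension at least $3$.
\item If $G_P=1$, then $\partial W_2$ is simply connected. So the attaching circles of the extra $2$-handles are homotopic to circles that geometrically cancel the $1$-handles, hence isotopic to them, since these are circles in a manifold of dimension $n\ge 4$.
\item What remains is a $0$-handle with $l$ $2$-handles, i.e.\ a regular neighbourhood in $\R^{n+1}$ of a wedge of $l$ two-spheres.
\item The embedding into $\R^{n+1}$ is what controls the framings; otherwise twisted $S^{n-2}$-bundles over $S^2$ can appear as summands. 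Uniqueness of PL-embeddings of $\bigvee_l S^2$ in $\R^{n+1}$, together with uniqueness of regular neighbourhoods, gives $W\cong\natural_l(S^2\times D^{n-1})$.
\end{itemize}
Thus one compares $M_P$ with $\#_l(S^2\times S^{n-2})$, with $l$ read off from $P$, rather than with $S^n$. No balanced presentations, no h-cobordism theorem and no stabilization bookkeeping are needed.

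Your $\partial(W_P\times I)$ is close to this $\partial W$. But without the auxiliary $2$-handles and the framing control, you cannot identify it when $G_P=1$.
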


Theorem\~\ref{thm:main_PL_input_and_output} is satisfying because the in- and output are both from the category of PL-simplicial manifolds. 
The theorem is an immediate consequence of the following more general theorem,  with in- and outputs mixing different equivalent relations on manifolds.

\begin{theorem}
\label{thm:main_triangulation_input_and_arbitrary_output}
We consider one of the following five (in general different) equivalence relations on 
$n$-dimensional PL-manifolds:
\begin{enumerate}
    \item being PL-homeomorphic,
    \item being homeomorphic,
    \item being homotopy equivalent,
    \item admitting isomorphisms between all homotopy groups,
    \item admitting an isomorphism between the fundamental groups.
\end{enumerate}
For each of these equivalence relations and for all integers $n\geq4$, there exists no algorithm that 
\begin{itemize}
    \item takes as input connected closed orientable $n$-dimensional PL-simplicial manifolds $M_1$ and $M_2$
    , and 
    %
    \item outputs whether or not $|M_1|$ is equivalent to $|M_2|$.
\end{itemize}
\end{theorem}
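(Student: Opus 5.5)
We reduce from the triviality problem for finitely presented groups. By the Adyan--Rabin theorem there is no algorithm which decides, given a finite presentation $P$, whether the group $G_P$ it presents is trivial. Concretely, we will use a slightly sharper form: there is an algorithmically constructible family of finite presentations $P_w$ (indexed by words $w$ in a fixed group with unsolvable word problem) such that each $G_{P_w}$ is either trivial or has unsolvable word problem (in particular is nontrivial), and no algorithm decides which case occurs. For any presentation, balancedness can be arranged, but we will not need it.

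The key step is to produce, algorithmically from a finite presentation $P=\langle x_1,\dots,x_k\mid r_1,\dots,r_m\rangle$, a closed orientable PL-simplicial $n$-manifold $M_P$ with $\pi_1(M_P)\cong G_P$. We also need that $M_P$ is PL-homeomorphic to a fixed standard manifold $N_{k,m}$ whenever $G_P$ is trivial. We will argue in the smooth/PL category as follows:
\begin{itemize}
\item Start with $\#_k(S^1\times S^{n-1})$.
\item Represent each $r_j$ by an embedded loop, which is possible by general position since $n\ge 4$.
\item Perform surgery along these loops, using the trivial framing (orientability makes the normal bundle trivial).
\item Write down an explicit triangulation, e.g.\ by triangulating standard handles, gluing them, and fixing once and for all a recipe for the embedded curves.
\end{itemize}
Van Kampen gives $\pi_1(M_P)=G_P$.

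The main obstacle is controlling the case where $G_P$ is trivial. Here we use the classical trick of Markov/Boone--Haken--Poenaru. In dimension $n\ge 5$, the $2$-handles are attached along homotopic curves. Homotopy implies isotopy for circles in manifolds of dimension $\ge 4$, and framings can be chosen compatibly. So we can reduce to the following comparison. If $G_P$ is trivial, then after adding $k$ further trivial $2$-handle pairs (stabilizing the presentation by $x_i\mapsto x_i$ relators, which does not change $G_P$), Tietze moves realized by handle slides show that $M_P$ is PL-homeomorphic to $\#_m(S^2\times S^{n-2})\#\ldots$, a manifold depending only on $(k,m)$.

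More simply, we take $M_P$ to be the boundary of the $(n+1)$-dimensional handlebody $W_P$ built from one $0$-handle, $k$ $1$-handles and $m$ $2$-handles. If $\pi_1$ is trivial, then for $n+1\ge 6$ the $1$-handles can be cancelled against $2$-handles after handle slides. This uses that in the $5$-dimensional handlebody, homotopy of attaching curves gives isotopy, and Andrews--Curtis issues disappear after adding trivial handles. We conclude that $W_P\cong \natural_{m-k}(S^2\times D^{n-1})$ with a unique framing choice since $\pi_1(SO(n-1))=\mathbb Z/2$ can be adjusted. Then $M_P=\partial W_P\cong\#_{m-k}(S^2\times S^{n-2})$. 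This works for $n\ge4$, since $n+1\ge5$ and curves in the $4$-dimensional boundary of $\natural S^1\times D^{n}$ are isotopic when homotopic. Taking doubles/boundaries also guarantees closed orientable manifolds.

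Given this construction, suppose an algorithm decided any of the relations (1)--(5) on the pair $(M_P, \#_{m-k}(S^2\times S^{n-2}))$ (with $m\ge k$ arranged by adding trivial relators). If $G_P=1$, then the two manifolds are PL-homeomorphic, hence equivalent in all five senses. If $G_P\ne1$, then the fundamental groups differ, so the two manifolds are not equivalent in any of the five senses, because each relation implies (5). Thus the algorithm would decide triviality of $G_P$, a contradiction. Theorem~\ref{thm:main_PL_input_and_output} is the case (1).
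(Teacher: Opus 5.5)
There is a genuine gap at the heart of your reduction: the claim that $G_P=1$ forces $W_P\cong\natural_{m-k}(S^2\times D^{n-1})$, where $W_P$ has only the $m$ relator $2$-handles.

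\textbf{Handle cancellation.} ``Homotopy implies isotopy'' lets you move the attaching circles only within their homotopy classes in $\partial(\natural_k S^1\times D^n)=\#_k(S^1\times S^{n-1})$. That boundary has free fundamental group. Handle slides then realize only Andrews--Curtis moves on $P$, so cancelling all $1$-handles against relator handles is an Andrews--Curtis-type problem. Adding trivial relators or handles gives only the stabilized identity $M_P\#_k(S^2\times S^{n-2})\cong\#_m(S^2\times S^{n-2})$, not the comparison with $\#_{m-k}(S^2\times S^{n-2})$ that you use. For $n\ge5$ you could recover your claim from Smale's handle trading and the Whitney trick, but you do not carry this out. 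For $n=4$ the five-dimensional argument you sketch does not work. Already for a balanced presentation, ``$M_P$ is PL-homeomorphic to $S^4$'' asks whether the $5$-dimensional thickening of a contractible $2$-complex is a PL ball, and no proof of that is known. (Also, if ``$x_i\mapsto x_i$ relators'' means adding the relators $x_i$, that kills $G_P$.)

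\textbf{The missing idea.} This is the third bullet of Definition~\ref{dfn:relalizeP}: attach $k$ \emph{additional} $2$-handles to $W_2$, and compare $\partial W$ with $\#_l(S^2\times S^{n-2})$, where $l$ is the number of relators, with no subtraction. When $G_P=1$, $\partial W_2$ is simply connected. So each extra attaching circle is homotopic, hence isotopic, \emph{in $\partial W_2$} (not in $\partial W_1$) to a curve crossing the $i$-th $1$-handle once. These handles cancel the $1$-handles geometrically with no Andrews--Curtis issue, leaving a $0$-handle and $l$ $2$-handles attached along a trivial link in $S^n$.

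\textbf{Framings.} Saying ``$\pi_1(SO(n-1))=\mathbb{Z}/2$ can be adjusted'' is not available after the fact, since the framings are fixed when $W_P$ is built. A twisted handle produces a nontrivial $S^{n-2}$-bundle summand over $S^2$. This has $w_2\neq0$; for $n=4$ it is $\CP^2\#\overline{\CP^2}$, which is not homotopy equivalent to $S^2\times S^2$. That breaks relations (1)--(3). You need an algorithmic mechanism that forces untwisted handles. The paper enumerates $W$ that PL-embed in $\R^{n+1}$ (Lemma~\ref{lem:algrelP}), then uses uniqueness of $\bigvee_l S^2\subset\R^{n+1}$ up to PL-isotopy together with uniqueness of regular neighbourhoods. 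Choosing framings compatible with a fixed spin structure would be an alternative.
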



The set of closed orientable $n$-dimensional manifolds up to homeomorphism is countable\~\cite{Cheeger_Kister}. Similarly, it is a folk result that the set of closed orientable $n$-dimensional smooth manifolds up to diffeomorphism is countable. Thus, topological and smooth manifolds can, in principle, be used as input data for algorithms. On the other hand, we do not know of any good way to present such manifolds naturally by data that could be used as input for algorithms. Possible solutions for 4-dimensional topological manifolds are discussed in\~\cite{Freedman_Zuddas,algorithm_4_manifold}. An approach to represent smooth manifolds is discussed in\~\cite{BooneHakenPoenaru}, although it is less practical than the other approaches based on simplicial complexes. In dimensions $n\leq6$ the PL-homeomorphism class of a PL-simplicial manifold $M$ determines a smooth structure (unique up to diffeomorphism) on $|M|$, see for example \cite[Theorem 2]{Milnor_46later}.\footnote{In general there exist PL-simplicial manifolds that admit no smooth structure or several non-diffeomorphic smooth structures, see for example\~\cite{Milnor_46later}.} And thus Theorem\~\ref{thm:main_PL_input_and_output} also makes a statement about the smooth category in dimensions $n=4,5,6$. 

In dimension $4$, there exists another natural presentation method for compact smooth manifolds. Indeed any such manifold can be presented by a Kirby diagram\~\cite{GS}. From an algorithmic viewpoint, Kirby diagrams of compact 4-dimensional smooth  manifolds are equivalent to triangulations of compact 4-dimensional smooth manifolds, see  \cite[Lemma 8.2]{algorithm_4_manifold} for details. Thus Theorem\~\ref{thm:main_PL_input_and_output} implies the following corollary.

\begin{corollary}
\label{cor:main_Kirby_input_and_output_diff}
There exists no algorithm that 
\begin{itemize}
    \item takes as input Kirby diagrams\footnote{We can see a Kirby diagram as a link labelled with integers (for $2$-handles) or dots (for $1$-handles). There are several known ways to represent diagrams of links (up to planar isotopy), for example via PD codes~\cite{knotatlas,Mast}, DT codes~\cite{dt_codes}, Gau\ss\ codes~\cite{KnotInfo}, braid words~\cite{Artin}, or isosignatures~\cite{regina}.} of connected closed orientable 4-dimensional smooth manifolds $M_1$ and $M_2$, and 
    \item outputs whether or not $M_1$ is diffeomorphic to $M_2$.\qed
\end{itemize}
\end{corollary}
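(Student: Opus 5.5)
We argue by contradiction and reduce Corollary~\ref{cor:main_Kirby_input_and_output_diff} to Theorem~\ref{thm:main_PL_input_and_output} in the case $n=4$. Suppose there is an algorithm $\mathcal{A}$ which, given Kirby diagrams of connected closed orientable smooth $4$-manifolds $M_1$ and $M_2$, decides whether $M_1$ is diffeomorphic to $M_2$. We will build from it an algorithm that decides PL-homeomorphism of $|M_1|$ and $|M_2|$ for connected closed orientable $4$-dimensional PL-simplicial manifolds. That contradicts Theorem~\ref{thm:main_PL_input_and_output}.

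The first step is an algorithmic translation from triangulations to Kirby diagrams. Given a PL-simplicial $4$-manifold $M$, we use \cite[Lemma 8.2]{algorithm_4_manifold}, which says that triangulations and Kirby diagrams are algorithmically equivalent presentations. This gives a procedure that outputs a Kirby diagram of a smooth manifold $X_M$ whose underlying PL-manifold is PL-homeomorphic to $|M|$. Concretely, one can take a handle decomposition dual to the triangulation (handles from the simplices of the barycentric subdivision), cancel or absorb the $3$- and $4$-handles as usual for closed manifolds, and read off the attaching link of the $1$- and $2$-handles with framings. Every step is combinatorial and finite, so this translation is computable. Connectedness and orientability carry over, and closedness is recorded by the convention that a Kirby diagram of a closed manifold determines the $3$- and $4$-handles uniquely.

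The second step is to compare the equivalence relations. In dimension $4$ every PL-manifold has a smoothing that is unique up to diffeomorphism; this is the case $n\le 6$ of \cite[Theorem 2]{Milnor_46later} recalled above. Moreover, diffeomorphic smooth manifolds have PL-homeomorphic Whitehead triangulations. It follows that $X_{M_1}$ is diffeomorphic to $X_{M_2}$ if and only if $|M_1|$ is PL-homeomorphic to $|M_2|$. So running $\mathcal{A}$ on the two diagrams answers the PL question, which is the desired contradiction.

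The main obstacle is the first step. We must make sure the passage from a triangulation to a Kirby diagram is genuinely algorithmic, and that it produces the smooth structure compatible with the given PL-structure rather than just some smoothing. This is exactly the content of \cite[Lemma 8.2]{algorithm_4_manifold}, so we cite it. Uniqueness of smoothings in dimension $4$ then removes any ambiguity in which smoothing was produced.
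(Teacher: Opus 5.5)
Your proposal is correct and is essentially the paper's argument. The paper also derives the corollary from Theorem~\ref{thm:main_PL_input_and_output} for $n=4$, by combining the algorithmic equivalence of triangulations and Kirby diagrams \cite[Lemma 8.2]{algorithm_4_manifold} with the fact that for $n\le 6$ the PL-homeomorphism type determines a smooth structure unique up to diffeomorphism \cite[Theorem 2]{Milnor_46later}, so diffeomorphism and PL-homeomorphism agree here; your remarks on Whitehead triangulations and on omitting the $3$- and $4$-handles only make explicit what the paper leaves implicit.
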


\subsection*{Unrecognizable manifolds}

In fact, the proof of Theorem\~\ref{thm:main_triangulation_input_and_arbitrary_output} shows something slightly stronger, namely that for no $n\geq4$ does there exist an algorithm that takes as input a connected closed orientable $n$-dimensional PL-simplicial manifold $M$ (or a Kirby diagram for $n=4$) and outputs whether or not there exists an integer $l\geq0$ such that $|M|$ is equivalent to $\#_l(S^2\times S^{n-2})$.

Using a slightly stronger formulation of the underlying undecidability result from group theory, we prove in Section\~\ref{sec:unrecognizable} along the same lines a more general statement, saying informally that there are fixed manifolds which are unrecognizable. The precise statements are the following.

\begin{theorem}
\label{thm:pi_trivial}
For no integer $n\geq4$ does there exist an algorithm that 
\begin{itemize}
    \item takes as input a connected closed orientable $n$-dimensional PL-simplicial manifold $M$
    , and 
    \item outputs whether or not $\pi_1(|M|)$ is isomorphic to the trivial group.
\end{itemize}
\end{theorem}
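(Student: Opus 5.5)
We reduce to the triviality problem for finitely presented groups: by the Adian--Rabin theorem there is no algorithm that decides, given a finite presentation $\langle x_1,\dots,x_k \mid r_1,\dots,r_m\rangle$, whether the presented group is trivial. We may assume $m \geq k$ after padding with trivial relators, or even $m=k$ if convenient. So it suffices to give an algorithm which, from any finite presentation $P$, constructs a connected closed orientable $n$-dimensional PL-simplicial manifold $M_P$ with $\pi_1(|M_P|)\cong G_P$. A decision procedure for triviality of $\pi_1$ of PL-simplicial manifolds, composed with this construction, would then decide triviality of $G_P$, which is a contradiction.

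The construction is the standard one, carried out entirely algorithmically.

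First, start with $N_0=\#_k(S^1\times S^{n-1})$, which has free fundamental group on $x_1,\dots,x_k$. It comes with an explicit triangulation, for instance a fixed triangulation of $S^1\times S^{n-1}$ and simplicial connected sums, which the algorithm writes down.

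Second, for each relator $r_j$, choose an explicit embedded simplicial loop in $N_0$ representing the word $r_j$. These loops are pairwise disjoint and PL-embedded; since $n\ge 4$, general position gives embeddedness, and concretely one can route the loops through a fixed model, after a subdivision computable from the word length.

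Third, take a regular neighbourhood $S^1\times D^{n-1}$ of each loop. Its normal bundle is trivial by orientability. Surger it: remove $S^1\times D^{n-1}$ and glue in $D^2\times S^{n-2}$. By van Kampen, since $n-2\ge 2$, each surgery kills exactly $r_j$ and adds nothing else to $\pi_1$. Hence the result $M_P$ has $\pi_1\cong G_P$. It is closed, connected, and orientable, because we may use the orientation-compatible framing.

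The main obstacle is making the second and third steps genuinely algorithmic on the level of simplicial complexes. We must produce explicit triangulations of the surgered manifold, check that the output is a PL-simplicial manifold (links of vertices are PL spheres), and confirm that this is computable from $P$.

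To handle this, I would fix in advance a concrete triangulated model. Write $N_0$ as the boundary of the $(n+1)$-dimensional handlebody $\natural_k (S^1\times D^n)$. The relator curves can then be placed in a standard position on a fixed $2$-complex spine, obtained by subdividing a neighbourhood of each $1$-handle into $|r_j|$ slots so that the curves are disjoint and simplicial by construction.

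Each surgery then replaces a standard simplicial copy of $S^1\times D^{n-1}$, with a fixed number of subdivisions along the $S^1$ direction, by a standard triangulated $D^2\times S^{n-2}$ that agrees on the boundary. Both pieces come from finitely many templates depending only on $n$ and the word length. Correctness of the PL-manifold condition is then local and holds for the templates.

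Equivalently, one could build $M_P$ as the boundary of a regular neighbourhood in $\R^{n+1}$ of a $2$-complex realizing $P$, which is a manifold whose $\pi_1$ is $G_P$ since $n+1\ge5$. This neighbourhood can be computed via simplicial regular neighbourhoods (second derived subdivision) of a PL embedding of the presentation complex in $\R^{n+1}$. I would present this version, as it avoids surgery bookkeeping.
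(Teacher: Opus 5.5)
Your proposal is correct and, in the version you say you would present, is essentially the paper's argument: reduce from the triviality problem by passing to the boundary of an $(n+1)$-dimensional thickening in $\R^{n+1}$ realizing $P$, whose fundamental group is the group presented by $P$ because the $2$-dimensional spine has codimension at least $3$. The differences are minor: the paper builds this thickening by the dovetailed search of Lemma~\ref{lem:algrelP} rather than by computing an explicit embedding and second derived neighbourhood, and although it nominally invokes Gordon's Adian--Rabin sets, the plain triviality theorem you use already suffices for this statement (your aside that one may even take $m=k$ is not achievable by padding when $m>k$, but you never use it).
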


\begin{theorem}
\label{thm:unrecognizable_different_categories}
There exists an integer $l\geq0$, such that for each of the equivalence relations $(1)$--$(3)$ from Theorem\~\ref{thm:main_triangulation_input_and_arbitrary_output} and for all integers $n\geq4$, there exists no algorithm that 
\begin{itemize}
    \item takes as input a connected closed orientable $n$-dimensional PL-simplicial manifold $M$
    , and 
    \item outputs whether or not $|M|$ is equivalent to $\#_l (S^2 \times S^{n-2})$.
\end{itemize}
\end{theorem}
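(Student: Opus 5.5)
We reduce Theorem \ref{thm:unrecognizable_different_categories} to the unsolvability of the triviality problem for finitely presented groups, in the following uniform form (Adyan--Rabin, strengthened by Boone--Rogers/Miller). There exist an integer $g$ and an integer $r_0$ with the following property: there is no algorithm which decides, for presentations $\langle x_1,\dots,x_g \mid r_1,\dots,r_{r_0}\rangle$ with exactly $g$ generators and exactly $r_0$ relators, whether the presented group is trivial. To get this form, start from any recursive family of presentations with undecidable triviality, for instance the Adyan--Rabin construction applied to a fixed finitely presented group with unsolvable word problem. In that construction the number of generators and relators stays bounded, and padding with trivial generators/relators makes the counts exactly $g$ and $r_0$. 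We then set $l \defeq r_0 - g + g' $, where $g'$ is determined by the construction below. We also allow ourselves to trade a relator $r_i$ for a balanced extra generator, which keeps $l$ fixed independently of the input.

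Given such a presentation $P$, we effectively build a PL-simplicial $n$-manifold $M_P$ in the standard way. Take $\#_g (S^1\times S^{n-1})$, triangulated explicitly. Represent each relator $r_i$ by an embedded PL circle; in dimension $n\ge 4$ an immersed loop can be perturbed to an embedding. Do surgery along these circles, i.e.\ remove $S^1\times D^{n-1}$ and glue in $D^2\times S^{n-2}$ using the framing that makes the result orientable (spin). Equivalently, double the $n$-dimensional handlebody with one $0$-handle, $g$ $1$-handles and $r_0$ $2$-handles. Every step is combinatorial, so a triangulation of $M_P$ is computable from $P$. By van Kampen, $\pi_1(|M_P|)\cong G_P$; this already gives Theorem \ref{thm:pi_trivial}.

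The key claim is the following: if $G_P$ is trivial, then $|M_P|$ is PL-homeomorphic to $\#_l(S^2\times S^{n-2})$ with $l=r_0-g$ (we arrange $r_0\ge g$). Conversely, if $G_P$ is nontrivial, then $|M_P|$ is not even homotopy equivalent to it, since $\pi_1$ differs. Granted the claim, an algorithm deciding any of the relations (1)--(3) against $\#_l(S^2\times S^{n-2})$ would decide triviality of $G_P$, a contradiction. This works because PL-homeomorphism implies homeomorphism implies homotopy equivalence, and in the trivial case all three hold.

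The main obstacle is the claim itself. For $n\ge5$, $M_P$ is the double of the handlebody $W$. If $\pi_1 W=1$, we can use handle trades, using $\pi_1=1$ and the fact that the handle dimensions $\le 2 < n/2$ allow the Whitney trick and Smale cancellation. This cancels all $1$-handles against $2$-handles, leaving $W\cong \natural_{r_0-g}(S^2\times D^{n-2})$, since $2$-handles attached along unknotted circles with the spin framing are trivial in codimension $\ge3$; hence $DW\cong\#_{r_0-g}(S^2\times S^{n-2})$. In dimension $n=4$ this fails: for $5$-dimensional $W$ the $2$-handles are fine, and doubling a $5$-dimensional handlebody gives the $5$-manifold case, not $4$. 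So for $n=4$ we instead take $M_P=\partial W^5$, where $W^5$ is the $5$-dimensional handlebody built from the presentation. In $\partial W^5$ the handles cancel in the $5$-manifold $W^5$ by the argument above (attaching circles in the $4$-dimensional boundary of $D^5\cup$ $1$-handles are determined up to isotopy by homotopy). Thus $W^5\cong\natural_{l}(S^2\times D^3)$ and $\partial W^5\cong\#_l(S^2\times S^2)$. I expect making the $1$/$2$-handle cancellation rigorous, namely Andrews--Curtis-type moves realized via handle slides in dimension $5$, to be the delicate step.
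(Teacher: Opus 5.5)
Your reduction breaks down at exactly the step you flag. The problem is worse than delicacy: as you set it up, the key claim for $n=4$ is not available. (Doubling a $4$-dimensional handlebody $X$ and taking $\partial W^5$ are the same construction, since $DX=\partial(X\times I)$. The real issue is whether the $1$-handles of $W^5$ can be cancelled.)

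Knowing that attaching circles in the $4$-dimensional boundary are determined up to isotopy by their homotopy classes only lets you perform Andrews--Curtis moves on the relator tuple. A handle slide replaces $r_i$ by $r_i$ times a conjugate of some other $r_j^{\pm1}$. Triviality of $G_P$ only says that each generator lies in the normal closure of $\{r_1,\dots,r_{r_0}\}$. It does not say that the tuple can be transformed into one containing a generator, which is what a geometric $1$/$2$-cancellation requires.

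Handle trading does not help. It creates $3$-handles, and cancelling those against $2$-handles in $W^5$ needs the Whitney trick in a $4$-dimensional level, which fails. For the same reason, when $n=5$ your Smale argument must be run in $W\times I$ rather than in the $5$-dimensional handlebody. For $r_0=g$ your claim would assert that the $5$-dimensional thickening of every contractible presentation complex is $D^5$. This is known for Andrews--Curtis trivial presentations but, to my knowledge, open in general. Freedman's classification would rescue relations (2) and (3) for $n=4$, but not (1).

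The missing idea is to stabilize first; this is what the paper does in Definition~\ref{dfn:relalizeP} and Lemma~\ref{lem:PtrivialiffS2}. Attach one extra $2$-handle per generator along a null-homotopic circle in $\partial W_2$, i.e.\ adjoin $g$ trivial relators; this leaves $\pi_1(W)\cong G_P$ unchanged. If $G_P=1$, then $\pi_1(\partial W_2)=1$. So each extra attaching circle is homotopic, hence isotopic since $\dim\partial W_2\geq 4$, to a circle avoiding the relator handles and running geometrically once over the corresponding $1$-handle. These cancel all $1$-handles without creating any $3$-handles, leaving $D^{n+1}$ with exactly $r_0$ $2$-handles.

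The framings must still be controlled. The paper uses that $W$ PL-embeds in $\R^{n+1}$, so $W$ is a regular neighbourhood of an embedded $\bigvee_{r_0} S^2$, which is unique up to isotopy by Husch's theorem. Your spin condition on $W$ would do the same job: a $2$-handle attached to $D^{n+1}$ along a circle in $S^n$ has framing in $\pi_1(SO(n-1))\cong\mathbb{Z}/2$, detected by $w_2$. This requires the framing choice, including for the extra handles, to be made algorithmically.

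With this change $l$ is just the number of relators. You then only need an Adian--Rabin set with a fixed number of relators, as in Theorem~\ref{thm:Adian_Rabin_set}, not fixed $g$ and $r_0$. The argument becomes uniform in $n\geq4$ with $M=\partial W^{n+1}$.
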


In the same way as Corollary\~\ref{cor:main_Kirby_input_and_output_diff} is implied by Theorem\~\ref{thm:main_triangulation_input_and_arbitrary_output}, Theorem\~\ref{thm:unrecognizable_different_categories} implies the following. 

\begin{corollary}
\label{cor:unrecognizable_Kirby}
There exists an integer $l\geq0$ such that there exists no algorithm that 
\begin{itemize}
    \item takes as input a Kirby diagram of a connected closed orientable 4-dimensional smooth manifold $M$, and 
    \item outputs whether or not $M$ is diffeomorphic to $\#_l(S^2 \times S^2)$.\qed
\end{itemize}
\end{corollary}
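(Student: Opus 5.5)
We deduce Corollary~\ref{cor:unrecognizable_Kirby} from Theorem~\ref{thm:unrecognizable_different_categories} in the case $n=4$ with equivalence relation~(1), being PL-homeomorphic, in the same way that Corollary~\ref{cor:main_Kirby_input_and_output_diff} was deduced from Theorem~\ref{thm:main_PL_input_and_output}. Fix the integer $l\geq 0$ provided by Theorem~\ref{thm:unrecognizable_different_categories}. We argue by contradiction: assume there is an algorithm $\mathcal{A}$ that takes a Kirby diagram of a connected closed orientable smooth $4$-manifold $M$ and decides whether $M$ is diffeomorphic to $\#_l(S^2\times S^2)$. From $\mathcal{A}$ we build an algorithm that contradicts Theorem~\ref{thm:unrecognizable_different_categories} for $n=4$.

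\emph{Step 1 (translation of input).} Let $M$ be a connected closed orientable $4$-dimensional PL-simplicial manifold. By \cite[Lemma 8.2]{algorithm_4_manifold}, there is an algorithm that produces from $M$ a Kirby diagram $K$. This diagram presents a smooth $4$-manifold $X_K$ diffeomorphic to $|M|$ equipped with its smooth structure compatible with the PL-structure. Since we are in dimension $4\le 6$, this smooth structure exists and is unique up to diffeomorphism by \cite[Theorem 2]{Milnor_46later}. Here one should check that the cited lemma works in the direction triangulation $\to$ Kirby diagram, for instance by taking the handle decomposition dual to the triangulation and reading off the attaching link. This is the only step where care is needed, and I expect it to be the main (though bookkeeping-level) point.

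\emph{Step 2 (comparison of equivalence relations).} We claim that $|M|$ is PL-homeomorphic to $\#_l(S^2\times S^2)$ if and only if $X_K$ is diffeomorphic to $\#_l(S^2\times S^2)$, where the latter carries its standard smooth and PL-structures. In dimension $4$, smooth manifolds have unique compatible PL-structures (Whitehead triangulations), and PL-manifolds have unique smoothings by \cite[Theorem 2]{Milnor_46later}. Hence diffeomorphism classes and PL-homeomorphism classes correspond bijectively, and the claim follows.

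\emph{Step 3 (conclusion).} Running $\mathcal{A}$ on $K$ therefore decides whether $|M|$ is PL-homeomorphic to $\#_l(S^2\times S^2)$. This contradicts Theorem~\ref{thm:unrecognizable_different_categories} with $n=4$ and relation~(1). Hence no such algorithm $\mathcal{A}$ exists, which proves the corollary.
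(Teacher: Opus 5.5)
Your proposal is correct and follows essentially the same route as the paper. The paper also deduces the corollary from Theorem~\ref{thm:unrecognizable_different_categories} with $n=4$ and relation~(1), using the algorithmic equivalence of triangulations and Kirby diagrams \cite[Lemma 8.2]{algorithm_4_manifold} together with the agreement of the PL and smooth categories in dimension $4$ \cite[Theorem 2]{Milnor_46later}.
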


It was shown by Novikov\~\cite{Novikov_sphere}, cf.\~\cite{Chernavsky}, that for every $n\geq5$ the $n$-sphere $S^n$ is unrecognizable. As observed in\~\cite{Chernavsky} it even follows from his construction that every connected compact $n$-dimensional PL-simplicial manifold $M$ is unrecognizable for $n\geq5$.

In recent years, there have been several attempts to determine the smallest (in terms of second homology) unrecognizable simply connected closed $4$-dimensional PL-simplicial manifold.
The strongest result to date is obtained by Tancer\~\cite{Tancer}, who proved that $\#_9(S^2\times S^2)$ is unrecognizable. 
Earlier results of Shtan'ko\~\cite{Stanko} and Gordon\~\cite{Gordon} established unrecognisability of $\#_l(S^2\times S^2)$ for $l=14$ and $l=12$, respectively; see also\~\cite{Chernavsky}. 
The recognisability of the simplest closed 4-dimensional manifold, namely the $4$-sphere $S^4$, remains unknown. In other words, the following fundamental problem is still open; see for example\~\cite{Weinberger,Kirby_Markovsthm,Gordon,Tancer}.

\begin{question}\label{conj:S4_intro}
Does there exist an algorithm that
\begin{itemize}
    \item takes as input a connected closed orientable 4-dimensional PL-simplicial manifold $M$, and
    \item decides whether or not $|M|$ is PL-homeomorphic to $S^4$?
\end{itemize}
\end{question}

\subsection*{Conventions}
With a manifold we always mean a topological manifold. If the manifold has extra attributes such as a PL-structure or a smooth structure, we mention this explicitly.

We take a pragmatic approach to the notion of algorithms. (The underlying precise notion is in terms of Turing machines~\cite{boolos_burgess_jeffrey} or equivalent models of computation.) For each algorithm, we will explain how the input is represented as finite data. We will then describe the algorithms in standard mathematical language, giving enough details that the translation to a concrete algorithmic setting is evident.

\subsection*{Acknowledgments}
The idea for this article started during the workshop on \textit{Algorithms in $4$-manifold topology}, hosted at Universit\"at Regensburg in September 2024, funded by the SFB\~1085 \emph{Higher Invariants} (Universit\"at Regensburg, funded by the DFG, ID 224262486), and organised by Stefan Friedl, Marc Kegel, and Birgit Tiefenbach. We thank the participants of the workshop for useful discussions. Special thanks go to Martin Tancer for giving a beautiful introductory talk on his work \cite{Tancer} and to Lisa Schambeck and Matthias Uschold for initial discussions about this subject.
The article was written during a week-long visit of SF and TH at the IMUS (Instituto de mathemáticas, Universidad de Sevilla) in February 2026. We thank the VII Plan Propio de Investigación y Transferencia of the University of Sevilla for funding of the visit and the IMUS for providing office space.

\subsection*{Individual grant support}
MK is supported by a Ram\'on y Cajal grant \mbox{(RYC2023-043251-I)} and PID2024-157173NB-I00 funded by MCIN/AEI/10.13039/501100011033, by ESF+, and by FEDER, EU; and by a VII Plan Propio de Investigación y Transferencia (SOL2025-36103) of the University of Sevilla.

\section{From group presentations to PL-simplicial manifolds}\label{section:presentation-to-pl}
In this section, we will assume that the reader has some familiarity with concepts from combinatorial topology and PL-topology. We refer to \cite{zeeman,hudson,Glaser,rourkesandersonnew,friedlmonster} for details.

\begin{definition}\label{def:combinatorial-manifold}\hfill
\begin{enumerate}
\item Two finite abstract simplicial complexes $K$ and $L$ are \emph{combinatorially homeomorphic} if $K$ and $L$ are stellar equivalent, i.e.\ if they are related by  a sequence of stellar subdivisions, stellar welds and simplicial isomorphisms. 
\item Let $n\in \N_0$. 
 A finite abstract simplicial complex is a 
\emph{combinatorial $n$-ball} if it is combinatorially homeomorphic to the abstract simplicial complex \mbox{$\Delta^n\defeq\{\{0,\dots,n\},\mathcal{P}(\{0,\dots,n\})\}$} and a 
\emph{combinatorial $n$-sphere} if it is combinatorially homeomorphic to the abstract simplicial complex $\partial \Delta^n\defeq\{\{0,\dots,n\},\mathcal{P}(\{0,\dots,n\})\setminus \{0,\dots,n\}\}$. 
\item A abstract simplicial complex $K$ is an \emph{$n$-dimensional \mbox{PL-simplicial} manifold} if for every vertex $v$ the link $\op{Lk}(K,v)$ is a combinatorial $(n-1)$-sphere or a combinatorial $(n-1)$-ball.
\item The boundary $\partial K$ of an $n$-di\-men\-sional PL-simplicial manifold $K$ is the union of all simplices $s$ such that  the link $\op{Lk}(K,s)$ is a combinatorial ball. By \cite[Corollary\~II.4]{Glaser} we know that $\partial K$ is again a PL-simplicial manifold.
We say $K$ is \emph{closed} if $K$ is finite and $\partial K=\emptyset$.
\item An \emph{$n$-dimensional PL-manifold} (note the missing \enquote{simplicial}) is a topological space together with a homeomorphism to the topological realization of an $n$-dimensional PL-simplicial manifold.
\end{enumerate}
\end{definition}

In \cite[Chapter 6]{rourkesandersonnew} a theory of PL-handle decompositions is developed. When adapted to our setting, it essentially parallels the well-known theory of handle decompositions for smooth manifolds, i.e.\ a $k$-handle on a PL-simplicial manifold $W$ is a $\Delta^k\times\Delta^{n-k}$ attached along a \mbox{PL-embedding} $\partial\Delta^k\times\Delta^{n-k}\to\partial W$. For technical reasons we choose the PL-structure on $\Delta^k\times\Delta^{n-k}$ that is obtained by taking the product PL-structure after barycentric subdivision of $\Delta^k$. This will be needed to ensure that the $k$-handle collapses onto its core. Also note that \cite{rourkesandersonnew} deals with PL-manifolds PL-embedded in some $\R^n$. While we in principle work with abstract manifolds, it will later be necessary to embed them. The treatment in \cite{rourkesandersonnew} then ensures that the necessary handle slides and cancellations are still possible.\pagebreak

Using handle attachments we associate PL-simplicial manifolds to a finite group presentation.  
\begin{definition}\label{dfn:relalizeP}
    Let  $P=\langle g_1,\dots, g_k\mid r_1,\dots, r_l\rangle$ be a finite group presentation and $n\geq4$. A handle decomposition on an $(n+1)$-dimensional PL-simplicial manifold $W$ \emph{realizes $P$} if it is obtained in the following manner:
    \begin{itemize}
        \item First build a PL-simplicial manifold $W_1$ by attaching $k$ many PL 1-handles via orientation-preserving attaching maps to a PL-ball $D^{n+1}$. 

        Note that up to reordering there exists a preferred isomorphism between $\pi_1(W_1)$ and $\langle g_1,\dots, g_k\rangle$ and that the inclusion $\partial W_1\to W_1$ induces an isomorphism on fundamental groups as we only attach handles of codimension at least 3. 
        \item Now attach $l$ many PL 2-handles to $W_1$ via attaching maps $\phi_1,\dots,\phi_l\colon\partial\Delta^2\times \Delta^{n-1}\to\partial W_1$ where $\phi_i(\partial \Delta^2\times\{0\})$ represents $r_i\in\langle g_1,\dots,g_k\rangle$ under the above isomorphism to form $W_2$
        \item Finally attach $k$ many additional 2-handles to $W_2$. 
    \end{itemize}
\end{definition}

To apply this construction in our argument, it is important that we can find such a manifold realizing a given group presentation algorithmically. To do this we recall the following terminology:
\begin{definition}
    A set $X$ is \emph{recursively enumerable} if there exists an algorithm that takes as input a natural number and outputs an element of $X$ such that every element of $X$ is the output of at least one natural number.
\end{definition}
It is part of this definition that the elements of $S$ are of a form that can be the output of an algorithm. Observe the following algorithmic lemma using a technique known as \enquote{dovetailing}, compare \cite[Corollary 7.15]{boolos_burgess_jeffrey}.
\begin{lemma}\label{lem:recenum}
    Let $X$ be a recursively enumerable set and $\mathcal A$ be an algorithm that takes as input an element $x\in X$. The subset $Y\subseteq X$ of elements on which $\mathcal A$ halts is recursively enumerable. 
\end{lemma}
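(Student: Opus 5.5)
The plan is to use the standard dovetailing argument. Let $e\colon\N\to X$ be the algorithm that witnesses recursive enumerability of $X$. Fix a computable bijection $\N\to\N\times\N$, $m\mapsto(i,s)$, for example the inverse of the Cantor pairing function. On input $m$ the new algorithm will compute $(i,s)$ and $x=e(i)$. It will then run $\mathcal A$ on $x$ for at most $s$ steps. If $\mathcal A$ halts within $s$ steps, it outputs $x$. Each of these steps terminates, since $e$ always halts and the simulation is cut off after $s$ steps. So this is a genuine algorithm defined on all of $\N$.

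Two points need checking. First, every output lies in $Y$: we only output $x$ when $\mathcal A$ has halted on $x$. Second, every $y\in Y$ is output. Choose $i$ with $e(i)=y$, and let $s$ be the number of steps $\mathcal A$ takes to halt on $y$. Then the index $m$ corresponding to $(i,s)$ outputs $y$.

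The only subtlety, and the main obstacle, is what to output when $\mathcal A$ has not halted within $s$ steps, because the definition requires an output for every natural number. If $Y=\emptyset$, then $Y$ is recursively enumerable by convention, or we simply exclude this case, since the definition as stated cannot be satisfied by an empty set. If $Y\neq\emptyset$, then $Y$ contains some element $y_0$, and the algorithm outputs $y_0$ in the non-halting case. This makes the construction non-uniform, but recursive enumerability only asks that such an algorithm exists, so hard-coding $y_0$ is legitimate. The step-bounded simulation of $\mathcal A$ is routine in any concrete model of computation, for instance via a universal Turing machine, as in \cite[Corollary 7.15]{boolos_burgess_jeffrey}.
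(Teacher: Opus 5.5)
Your proposal is correct and is the same dovetailing argument as the paper's. The paper's schedule (step $1$ on the $1$st element, step $1$ on the $2$nd, step $2$ on the $1$st, $\dots$) is exactly the antidiagonal (Cantor pairing) enumeration of the pairs $(i,s)$ that you make explicit. Your version is slightly more careful than the paper's informal stream-of-outputs description: you make the enumeration a total function on $\N$ by outputting a fixed $y_0\in Y$ when $\mathcal A$ has not halted within $s$ steps, and you flag the case $Y=\emptyset$, whereas the paper passes over both points.
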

\begin{proof}
    The algorithm $\mathcal A$ consists of a sequence of computational steps. Construct a recursive enumeration of $Y$ as follows:
    \begin{itemize}
        \item Run step 1 of $\mathcal A$ on the 1\textsuperscript{st} output of the recursive enumeration of $X$.
        \item Run step 1 of $\mathcal A$ on the 2\textsuperscript{nd} output of the recursive enumeration of $X$.
        \item Run step 2 of $\mathcal A$ on the 1\textsuperscript{st} output of the recursive enumeration of $X$.
        \item Run step 1 of $\mathcal A$ on the 3\textsuperscript{rd} output of the recursive enumeration of $X$.
        \item Run step 2 of $\mathcal A$ on the 2\textsuperscript{nd} output of the recursive enumeration of $X$.
        \item Run step 3 of $\mathcal A$ on the 1\textsuperscript{st} output of the recursive enumeration of $X$.
        \item $\dots$
    \end{itemize}
    If this step-wise execution of $\mathcal A$ halts on an output of the recursive enumeration of $X$, output it. 
\end{proof}
We can algorithmically find a manifold realizing $P$ that embeds into $\R^{n+1}$ by first enumerating a larger set and then narrowing down our search using the previous lemma.
\begin{lemma}\label{lem:algrelP}
    For every $n\geq4$, there exists an algorithm that
    \begin{itemize}
    \item takes as input a  finite group presentation $P$, and
    \item outputs an $(n+1)$-dimensional PL-simplicial manifold $W$ which PL-embeds into $\mathbb R^{n+1}$ and allows a handle decomposition realizing $P$.
\end{itemize} 
\end{lemma}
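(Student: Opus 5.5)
The strategy is the one announced just before the statement: enumerate a large, easily listable set of candidates and then cut it down with Lemma~\ref{lem:recenum}. First I show that the set $X$ of all tuples $(W, \mathcal{H}, \iota, c)$ is recursively enumerable. Here $W$ is a finite abstract simplicial complex of dimension $n+1$, $\mathcal{H}$ is a finite collection of subcomplexes of some subdivision of $W$ (a candidate handle decomposition, with each handle given together with an explicit simplicial isomorphism to a subdivision of $\Delta^k\times\Delta^{n+1-k}$ with the fixed PL-structure described above), $\iota$ is a linear embedding of some subdivision of $W$ into $\R^{n+1}$ with rational vertex coordinates, and $c$ is a certificate for $\pi_1$-data (explained below). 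All of these are finite objects, so they can be listed by increasing size.

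Next I design a semi-algorithm $\mathcal{A}$ that takes a pair consisting of a presentation $P$ and an element of $X$, and halts exactly when the tuple is a valid witness for $P$. The checks are the following.
\begin{enumerate}
\item $W$ is a PL-simplicial manifold. Combinatorial spheres and balls of each dimension form a recursively enumerable set: enumerate all finite sequences of stellar moves starting from $\partial\Delta^{m}$ or $\Delta^m$. So for each vertex link I search in parallel for a stellar equivalence to $\partial\Delta^n$ or $\Delta^n$. This search halts if and only if $W$ is a PL-simplicial manifold.
\item $\iota$ is an embedding. This is decidable by checking pairwise intersections of rational simplices with linear algebra.
\item The handles are attached correctly. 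For the $0$-handle, each $1$-handle and each $2$-handle, I check that it meets the union of the previous pieces exactly in the image of $\partial\Delta^k\times\Delta^{n+1-k}$. I check that the union of all pieces is $W$. I check orientation preservation for the $1$-handles by comparing orientations of the simplices, which is decidable. The $k+l$ counts of $1$- and $2$-handles must match $P$.
\item Each attaching circle $\phi_i(\partial\Delta^2\times\{0\})$ represents $r_i$. The circle is an explicit edge path in $\partial W_1$. The certificate $c$ supplies an explicit edge-path homotopy, i.e.\ a finite sequence of elementary triangle moves in the $1$-handlebody $W_1$, from that path to the standard word $r_i$ traced through the cores of the $1$-handles (up to a chosen reordering and a choice of basepoint arcs). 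Checking such a sequence is mechanical, and a valid certificate exists precisely when the relation is represented correctly. Since $\pi_1(\partial W_1)\to\pi_1(W_1)$ is an isomorphism, it does not matter whether the homotopy lies in $W_1$ or in $\partial W_1$.
\end{enumerate}

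By Lemma~\ref{lem:recenum}, applied to the recursively enumerable set of all pairs from $X$ with $P$ fixed, the set of valid witnesses for $P$ is recursively enumerable. The algorithm outputs the $W$ from the first valid witness it finds.

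It remains to show that this search terminates, which needs the existence of at least one witness; I expect this to be the main point. Start with the standard PL ball $D^{n+1}\subset\R^{n+1}$ and attach $k$ standard $1$-handles inside $\R^{n+1}$. Since $n\ge 4$, general position lets any loops in the $n$-dimensional boundary be represented by embedded circles realizing the words $r_i$. These circles have trivial framing ambiguity up to the choice of a framing, and the normal bundle is trivial because the manifold is orientable. Thicken them to $2$-handles embedded in $\R^{n+1}$ outside $W_1$. Do the same for $k$ further arbitrary $2$-handles, for instance along trivial circles. All of this can be done with rational PL data, so a witness lies in $X$, and the dovetailed search halts.
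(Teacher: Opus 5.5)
Your proposal is correct in outline. It has the same architecture as the paper's proof: enumerate a recursively enumerable superset, filter it with the dovetailing lemma, then show the filtered set is non-empty. The certificates and the existence argument, however, are different.

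\textbf{How the two routes differ.} The paper avoids coordinates. Using \cite[Theorem~2.14]{rourkesandersonnew}, it replaces ``PL-embeds into $\R^{n+1}$'' by ``is a subcomplex of a complex stellar equivalent to $\partial\Delta^{n+2}$'' and enumerates such subcomplexes. For non-emptiness it embeds the presentation complex in $\R^{n+1}$ by general position and takes a regular neighbourhood.

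You instead certify embeddability by an explicit rational linear embedding of a subdivision. This relies on the standard fact that linear embeddings of a finite complex form an open set of vertex configurations, so rational data suffice. You also supply explicit handle identifications and edge-path homotopy certificates. This makes the ``halts if and only if'' claims more transparent than the paper's terse ``finitely many possibilities for the location of the handles''. That is especially true for the requirement that the attaching circles represent the $r_i$. Your construction also explicitly produces the $k$ extra $2$-handles of the third bullet of Definition~\ref{dfn:relalizeP}, which the paper's regular neighbourhood argument leaves implicit.

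\textbf{The step that still needs an argument.} The price is paid in the existence step, where the claim that actually needs proof is not the one you argue. Framings and triviality of the normal bundle of the attaching circle are not the issue: any framing is allowed. What is needed is that each attaching circle $C\subset\partial W_1$ bounds an embedded $2$-disk in the closure of $\R^{n+1}\setminus W_1$. This disk must meet $\partial W_1$ only in $C$ and be disjoint from the disks already used for earlier $2$-handles. Without this, ``thicken them to $2$-handles embedded in $\R^{n+1}$ outside $W_1$'' is unjustified.

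The claim is true, for the following reasons.
\begin{enumerate}
\item $W_1$ together with the earlier $2$-handles collapses onto a polyhedron of dimension at most $2$. Its closed complement in $\R^{n+1}$ is therefore simply connected, since the codimension is $n-1\geq 3$.
\item General position relative to the boundary turns a null-homotopy of $C$ in this complement into an embedded disk, because $2\cdot 2<n+1$.
\item A regular neighbourhood of that disk in the complement is the required $2$-handle.
\end{enumerate}
With this supplied, your construction is a handle-by-handle version of the paper's general position embedding of the presentation complex, and the proof is complete.
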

\begin{proof}
    A PL-simplicial manifold $W$ PL-embeds into $\R^{n+1}$ if and only if there exists a PL-simplicial manifold PL-homeomorphic to $S^{n+1}$ with $W$ as a subcomplex (see \mbox{\cite[Theorem\~2.14]{rourkesandersonnew}}). The set of compact PL-simplicial manifolds (up to simplicial isomorphism) that are PL-hom\-e\-o\-mor\-phic to $S^{n+1}$ is recursively enumerable. This is obtained by starting with some $S^{n+1}$ and systematically applying all (finitely many) possible stellar subdivisions and stellar welds, then iterating this process on the results. As each PL-structure on $S^{n+1}$ is finite, each such PL-simplicial manifold has finitely many subcomplexes. It follows that the set $X$ of subcomplexes (up to simplicial isomorphism) of PL-simplicial manifolds PL-homeomorphic to $S^{n+1}$ is recursively enumerable.   
    
    We now apply Lemma\~\ref{lem:recenum} with the following algorithms:
    \begin{itemize}
        \item There exists an algorithm with input a finite abstract simplicial complex $W$ that halts if and only if $W$ is a compact $(n+1)$-dimensional PL-simplicial manifold. This again follows by systematically applying stellar subdivisions and stellar welds to the links of the finitely many vertices of $W$ to check if they are combinatorial $n$-spheres or combinatorial $n$-balls. If this is the case, the process will eventually stop.
        \pagebreak
        \item There exists an algorithm with input a compact PL-simplicial manifold $W$ that halts if and only if $W$ allows a handle decomposition realizing $P$. This is the case as there are only finitely many possibilities for the location of the handles. 
    \end{itemize}
    Thus the set of $(n+1)$-dimensional PL-simplicial manifolds (up to simplicial isomorphism) that PL-embed into $\R^{n+1}$ and allow a handle decomposition realizing $P$ is recursively enumerable.
    
    It remains to show that it is non-empty: We realize the presentation complex $X$ of $P$ as a 2-dimensional finite abstract simplicial complex. It therefore PL-embeds into $\R^{n+1}$ for $n\geq2\cdot2$ by a general position argument \cite[Theorem\~5.4]{rourkesandersonnew}. Hence, there exists a PL-simplicial structure on $\R^{n+1}$ such that $X$ arises as a subcomplex (see \cite[Theorem\~2.14]{rourkesandersonnew}). A regular neighbourhood of $X$ in it has a handle decomposition realizing $P$ by \cite[Proposition\~6.9]{rourkesandersonnew}.
\end{proof}
We will crucially use that $W$ embeds into $\R^{n+1}$ because this restricts how the 2-handles can be attached. In a certain sense, it forces all of them to be \enquote{untwisted}. We use this crucially in the following lemma which is the main idea in Markov's proof\~\cite{Markov} and says that the PL-homeo\-mor\-phism type of a manifold in realizing $P$ detects whether $P$ presents the trivial group. 
\begin{lemma}\label{lem:PtrivialiffS2}
    Let $n\geq4$ and $P$ be a finite group presentation with $l$ relations. Let $W$ be an $(n+1)$-dimensional PL-simplicial manifold that allows a PL-handle decomposition realizing $P$ and PL-embeds into $\R^{n+1}$. Then $P$ presents the trivial group if and only if $\partial W$ is PL-homeomorphic to $\#_l(S^2\times S^{n-2})$.
\end{lemma}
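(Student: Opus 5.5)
The plan is to prove both directions by relating $\partial W$ to $W$ and then simplifying the handle decomposition of $W$. The basic tool is that $W$ is built from $\partial W\times[0,1]$ by attaching the dual handles. The dual of a $j$-handle has index $n+1-j$, so the duals of the $0$-, $1$- and $2$-handles have indices $n+1$, $n$ and $n-1\geq 3$. Handles of index at least $3$ do not change the fundamental group, so the inclusion induces $\pi_1(\partial W)\cong\pi_1(W)$. By van Kampen, $\pi_1(W)$ is the free group $\langle g_1,\dots,g_k\rangle$ modulo the normal closure of the classes of all $k+l$ attaching circles. Its quotient by the last $k$ circles is the group $G_P$ presented by $P$.

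\emph{The direction $\Leftarrow$.} Assume $\partial W\cong\#_l(S^2\times S^{n-2})$. Then $\pi_1(W)\cong\pi_1(\partial W)=1$. The main thing to check here is that this really forces $G_P=1$, and not merely that a quotient of $G_P$ is trivial. If the last $k$ attaching circles are arbitrary, triviality of $\pi_1(W)$ only says that $G_P$ modulo $k$ extra relators is trivial. So at this point I would go back to Definition~\ref{dfn:relalizeP} and the proof of Lemma~\ref{lem:algrelP} and extract whatever information the construction gives about the extra $k$ handles. If that is not enough, I would use a homological count as well: compare $H_2(\partial W)\cong\mathbb Z^{l}$ with the rank forced by $k+l$ two-handles and $k$ one-handles, and try to pin down the contribution of the extra handles. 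I expect this direction, and specifically the role of the $k$ additional $2$-handles, to be the delicate point of the lemma.

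\emph{The direction $\Rightarrow$.} Assume $G_P=1$. Then $\pi_1(W)=1$ as well, since it is a quotient of $G_P$. The plan is to cancel all $1$-handles and reduce to a $0$-handle with exactly $l$ two-handles attached along unknotted, trivially framed circles.
\begin{enumerate}
\item Because $W$ is simply connected and has dimension $n+1\geq5$, I would run the standard handle-trading argument from the proof of the $h$-cobordism theorem, in the PL setting of \cite[Chapter~6]{rourkesandersonnew}. Handle slides among the $2$-handles realize the required change of basis. Each $1$-handle is then cancelled by a $2$-handle whose attaching circle meets the belt sphere geometrically once.
\item The key geometric input for step~(1) is that in the $n$-manifold $\partial W_1$ with $n\geq4$, homotopic circles are isotopic by general position. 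So a circle whose homotopy class meets a belt sphere algebraically once can be isotoped to meet it geometrically once.
\item After all cancellations only a $0$-handle remains, together with $(k+l)-k=l$ two-handles attached to $S^n$.
\item Since $n\geq4$, each of these attaching circles is isotopic to a trivial unknotted circle inside a small ball in $S^n$, and they can be separated from one another.
\item Framings lie in $\pi_1(SO(n-1))=\mathbb Z/2$. The embedding $W\subset\R^{n+1}$ makes $W$ parallelizable, so $w_2$ of each $S^2\times D^{n-1}$ summand vanishes and every framing is the trivial one. This is where the embedding hypothesis enters.
\end{enumerate}
Hence $W\cong\natural_l(S^2\times D^{n-1})$, and therefore $\partial W\cong\#_l(S^2\times S^{n-2})$.
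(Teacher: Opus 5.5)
\textbf{The direction $\Leftarrow$.} You leave this direction unproved, and the homological fallback you propose cannot close it. Your suspicion is correct. If the $k$ extra $2$-handles in Definition~\ref{dfn:relalizeP} may be attached along arbitrary circles (and nothing in Lemma~\ref{lem:algrelP} constrains them), then this implication is false, so no further computation can rescue it. Here is a counterexample. Let $P=\langle g\mid g^2\rangle$, and let $W$ be a regular neighbourhood in $\R^{n+1}$ of the presentation complex of $\langle g\mid g^2,g\rangle$, with the last $2$-cell playing the role of the extra handle. That handle runs once over the $1$-handle and cancels it. What remains is a single $2$-handle on $D^{n+1}$, which is trivially framed by your own step~(5). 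Hence $\partial W\cong S^2\times S^{n-2}$, while $P$ presents $\mathbb{Z}/2$. The paper's proof of this direction consists only of the assertion that $P$ presents $\pi_1(W)$. That assertion is justified only if the extra handles are attached along circles that are null-homotopic in $W_2$ (e.g.\ small unknotted circles). So the missing ingredient is this hypothesis, not more information. Once it is in place, $\pi_1(\partial W)\cong\pi_1(W)\cong G_P$ settles the direction at once.

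\textbf{The direction $\Rightarrow$.} Here your step~(2) is false as stated, and step~(1) rests on it. In $\partial W_1\cong\#_k(S^1\times S^{n-1})$ the fundamental group is free, and algebraic intersection numbers with belt spheres only detect its abelianization. For example, a circle representing $[g_1,g_2]g_1=g_1g_2g_1^{-1}g_2^{-1}g_1$ meets the first belt sphere algebraically once and the second algebraically zero times. But this word is cyclically reduced with three occurrences of $g_1^{\pm1}$, so every circle freely homotopic to it meets the first belt sphere at least three times. Thus \enquote{homotopic implies isotopic} in $\partial W_1$ does not turn your algebraic change of basis into geometric cancellations.

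The idea you need is the one the paper uses, and it relies on the specific shape of the decomposition. If $G_P=1$, the $l$ relator handles alone already give $\pi_1(\partial W_2)\cong\pi_1(W_2)=1$. Call the extra handles $e_1,\dots,e_k$. The attaching circle of $e_i$ is then homotopic in $\partial W_2$, hence isotopic since $n\ge4$, to a circle in $\partial W_1$ that avoids the relator handles and runs once over the $i$-th $1$-handle. So $e_i$ can be attached before the relator handles, where it cancels that $1$-handle. Iterating works because each $\partial(W_2\cup e_1\cup\dots\cup e_{i-1})$ is still simply connected. This removes all $1$-handles without any handle slides.

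\textbf{Your steps~(3)--(5).} After this, they are a valid alternative to the paper's ending. The paper regards $W$ as a regular neighbourhood of a wedge of $l$ two-spheres in $\R^{n+1}$, and uses Husch's uniqueness of such embeddings together with uniqueness of regular neighbourhoods. You instead unknot and unlink the circles in $S^n$ and kill the $\mathbb{Z}/2$ framing ambiguity with $w_2(W)=0$. This avoids Husch's theorem, at the cost of needing PL framings and Stiefel--Whitney classes.
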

\begin{proof}
    For the \enquote{if}-direction observe that $P$ presents the fundamental group of $W$ and the inclusion $\partial W\to W$ induces an isomorphism on fundamental groups as $W$ has a handle decomposition where all handles have codimension at least 3.

    Now assume that $P$ presents the trivial group. We first prove that $W$ has a PL-handle decomposition consisting of a 0-handle and $l$ many 2-handles. As above, the inclusion $\partial W_2\to W_2$ induces an isomorphism on fundamental groups. By construction, $\pi_1(W_2)$ is presented by $P$, hence $\pi_1(\partial W_2)$ is trivial. This implies that the attaching spheres of the $2$-handles attached in the last bullet point of Definition~\ref {dfn:relalizeP} are homotopic in $\partial W$ to loops that cancel the $1$-handles as in \cite[Lemma 6.4]{rourkesandersonnew}. Since the dimension of $\partial W$ is at least\~4, they are also PL-isotopic to such a loop (compare \cite[Corollary 5.9]{rourkesandersonnew}). It follows that $W$ has a handle decomposition given by attaching the $l$ many 2-handles from the second bullet point of Definition~\ref {dfn:relalizeP} to a $D^{n+1}$.

    Let $A$ be the wedge of spheres formed by the cores of the 2-handles and the cone of their attaching spheres in the 0-handle $D^{n+1}$. After replacing the combinatorial structure on $D^{n+1}$ by the cone of $\partial D^{n+1}$ in the given structure, we may assume that $A\subseteq W$ is a subcomplex. This is still a PL 0-handle as a sequence of stellar subdivisions on stellar welds on $\partial D^{n+1}$ induces one on its cone, hence the PL-homeomorphism type of $W$ remains unchanged. By \cite[Corollary\~3.30]{rourkesandersonnew} $W$ is a regular neighbourhood of $A$ in $\R^{n+1}$ since $W\hookrightarrow\R^{n+1}$ has \mbox{codimension-0}. As a PL-embedding $\bigvee_l S^2\hookrightarrow \R^{n+1}$ is unique up to PL-isotopy by \cite[Theorem\~1]{Husch}, the uniqueness of regular neighbourhoods \cite[Theorem\~3.8]{rourkesandersonnew} yields that $W$ is unique up to PL-homeomorphism. Since the $l$-fold boundary connected sum of $S^2\times D^{n-1}$ is certainly a possibility, the claim follows. 
\end{proof}

\section{Unsolvability of the decision problem}
In this section, we will give the proof of Theorem\~\ref{thm:main_triangulation_input_and_arbitrary_output}. For that we first recall that the triviality problem for finitely presented groups is unsolvable.

\begin{theorem}[Novikov\~\cite{Novikov-algo-insolv}]\label{thm:triviality_problem}
    There exists no algorithm that
    \begin{itemize}
        \item takes as input a finite group presentation, and
        \item outputs whether or not it presents the trivial group.\qed
    \end{itemize}
\end{theorem}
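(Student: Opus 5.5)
We would not reprove this from scratch; the natural route is the Adian--Rabin reduction from the word problem. We take as given the Novikov--Boone theorem: there is a finite presentation $\langle X\mid R\rangle$ of a group $G$ with unsolvable word problem. That is, no algorithm decides, for a word $w$ in $X^{\pm1}$, whether $w=1$ in $G$. The plan is to give an algorithm $w\mapsto P_w$ producing finite presentations such that $P_w$ presents the trivial group if and only if $w=1$ in $G$. A triviality algorithm composed with $w\mapsto P_w$ would then solve the word problem for $G$, a contradiction.

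\textbf{Construction of $P_w$.} Start from $G*\langle x\rangle$ with $x$ a new generator of infinite order.
\begin{enumerate}
\item Write down explicit words, for example $u_i=x^{-i}\,(w x)\,x^{-i}\,(w x)^{-1}x^{i}$ for $i=1,\dots,N$, or any similar family. These should be chosen so that whenever $w\neq 1$ in $G$ they freely generate a free subgroup of $G*\langle x\rangle$. This is checked by normal forms in the free product.
\item Adjoin new generators $y,z$ and stable letters. Add relations of HNN/amalgam type that identify the $u_i$ with a fixed basis of a free subgroup of an auxiliary group $\langle y,z\mid\ldots\rangle$. That auxiliary group is built so that imposing its own relations makes it trivial, for instance via $y^{-1}zy=z^2$ and $z^{-1}yz=y^2$ together with relations expressing $x$ and the generators of $X$ in terms of $y,z$.
\end{enumerate}
The map $w\mapsto P_w$ is clearly computable from $w$.

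\textbf{Verifying the equivalence.}
\begin{itemize}
\item If $w=1$ in $G$: the relations collapse $wx$ to $x$. A short chain of consequences then kills $x$, then $y$ and $z$, and finally every generator in $X$, since each was expressed through $y,z$. Hence $P_w$ presents the trivial group.
\item If $w\neq1$ in $G$: the $u_i$ freely generate a free subgroup by step 1. The group presented by $P_w$ is then an iterated free product with amalgamation or HNN extension along free subgroups. By the normal form theorem (Britton's lemma), $G$ embeds into it, so it is nontrivial.
\end{itemize}

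\textbf{Main obstacle.} The delicate step is step 1: choosing the words $u_i$ so that they provably freely generate when $w\neq1$, including when $w$ has finite order in $G$. Simultaneously, the relations must be arranged so that the collapse when $w=1$ really propagates to all generators. I would first try the classical Rabin choice of conjugates of $wx$ by powers of $x$ and verify freeness by a reduced-length argument in the free product $G*\langle x\rangle$. The embedding of $G$ then follows formally from Britton's lemma.
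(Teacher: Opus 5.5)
Your reduction strategy, solving the word problem of a Novikov--Boone group $G$ via an Adian--Rabin construction, is the standard one; the paper gives no proof and just cites the result. Step~1 is actually fine, contrary to your own assessment. Your $u_i$ simplify to $x^{-i}wx^{-i}w^{-1}x^{i}$. A reduced word $u_{i_1}^{e_1}\cdots u_{i_m}^{e_m}$ then has free-product normal form $x^{-i_1}\,w\,x^{-i_1e_1}\,w^{-1}\,x^{i_1-i_2}\,w\cdots w^{-1}x^{i_m}$, and every syllable is nontrivial as soon as $w\neq1$, whatever the order of $w$. (The fallback you mention, conjugates $x^{-i}(wx)x^{i}$, is \emph{not} free when $w^2=1$, since then $u_iu_{i+1}=x^2$ for every $i$.) The genuine gap is Step~2, which is not a construction and, as sketched, cannot work.

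There are three problems with Step~2.
\begin{enumerate}
\item The group $\langle y,z\mid y^{-1}zy=z^2,\ z^{-1}yz=y^2\rangle$ is trivial, so it has no free subgroup with which to identify the $u_i$.
\item Suppose you amalgamate or HNN-extend only along the $u_i$. Then every new relation involves the letters of $X$ only through $w$. When $w=1$ in $G$, these become, modulo $R$, relations among the new letters alone. So the group of $P_w$ is $G*Q$ for some group $Q$ and contains $G\neq1$ as a free factor, and the direction ``$w=1\Rightarrow$ trivial'' fails.
\item Suppose instead you add relations expressing $x$ and the generators of $X$ as words in $y,z$ directly. Then for $w\neq1$ the group is a quotient of the amalgam, not an amalgam or HNN extension along free subgroups, and Britton's lemma no longer shows that $G$ embeds.
\end{enumerate}

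The missing idea is that the generators $a_j$ of $X$, and $x$, must occur \emph{inside} the free family along which you amalgamate. That family needs two kinds of elements: ones that die when $w=1$ (e.g.\ $c=[w,x]$ and $x^{-1}cx$), and ones that reduce to $a_j$ resp.\ $x$ when $w=1$ (e.g.\ of the shape $a_j\,x^{-m_j}cx^{m_j}$). You must prove that this enlarged family freely generates a free subgroup of $G*\langle x\rangle$ whenever $w\neq1$. You must also match it with a free family in a genuinely free group $F(y,z)$ beginning with $v_1=y^{-1}zyz^{-2}$ and $v_2=z^{-1}yzy^{-2}$. Then $w=1$ forces $v_1=v_2=1$, hence $y=z=1$, hence every generator is trivial. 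The freeness of that enlarged family is the real work, and your proposal does not address it. Standard treatments obtain it through a tower of HNN extensions whose stable letters supply the free bases, so that Britton's lemma applies directly.
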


\begin{proof}[Proof of Theorem\~\ref{thm:main_triangulation_input_and_arbitrary_output}]
We consider an integer $n\geq 4$ and one of the following equivalence relations on $n$-dimensional PL-manifolds:
\begin{enumerate}
    \item being PL-homeomorphic,
    \item being homeomorphic,
    \item being homotopy equivalent,
    \item admitting isomorphisms between all homotopy groups,
    \item admitting an isomorphism between the fundamental groups.
\end{enumerate}
We will argue by contradiction. We assume that there exists an algorithm that
\begin{itemize}
    \item takes as input $n$-dimensional PL-simplicial manifolds $M_1$ and $M_2$ 
    \item outputs whether or not $|M_1|$ is equivalent to $|M_2|$.
\end{itemize}
We claim that the existence of such an algorithm would imply the existence of an algorithm deciding if a group is trivial and thus contradict Theorem\~\ref{thm:triviality_problem}. 

Indeed, let $P$ be a finite group presentation. Using Lemma\~\ref{lem:algrelP}, we can algorithmically construct a PL-simplicial manifold $W$ which PL-embeds into $\R^{n+1}$ and allows a handle decomposition realizing $P$. Now create a PL-simplicial manifold $T$ realizing $\#_l(S^2 \times S^{n-2})$ where $l$ is the number of relations in $P$ and use the potential algorithm to decide if $\partial W$ and $T$ represent equivalent spaces. By Lemma\~\ref{lem:PtrivialiffS2} this is the case if and only if $P$ presents the trivial group.
%
\end{proof}

\section{Unrecognizable manifolds}\label{sec:unrecognizable}

The proof of Theorems\~\ref{thm:pi_trivial} and\~\ref{thm:unrecognizable_different_categories} is obtained in the same way from the following strengthening of Theorem\~\ref{thm:triviality_problem}.

\begin{theorem}[Gordon {\cite[Theorem 2.2]{Gordon}}] \label{thm:Adian_Rabin_set}
For every $l\geq13$, there exists an \emph{Adian--Rabin set} $S$ with $l$ relations, i.e.\ a set of finite group presentations with exactly $l$ relations, such that there exist no algorithm that
\begin{itemize}
    \item takes as input a presentation $P$ from $S$, and
    \item outputs whether or not $P$ presents the trivial group.\qed
\end{itemize}
\end{theorem}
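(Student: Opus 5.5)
The statement immediately above is Gordon's Theorem\~\ref{thm:Adian_Rabin_set}, which the paper quotes with \qed. Still, here is how I would prove it. The plan is to start from a fixed finitely presented group $G_0$ with unsolvable word problem and run the Adian--Rabin construction uniformly over words. The output is a family of presentations $P_w$ indexed by words $w$ in the generators of $G_0$, with the following properties:
\begin{itemize}
\item each $P_w$ has the same number of relations;
\item $P_w$ presents the trivial group if and only if $w=1$ in $G_0$;
\item $w\mapsto P_w$ is computable.
\end{itemize}
Taking $S=\{P_w\}$, any algorithm deciding triviality on $S$ would solve the word problem of $G_0$, which is a contradiction.

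The key steps are as follows.

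First, fix $G_0=\langle a_1,\dots,a_m\mid R_1,\dots,R_s\rangle$ with unsolvable word problem, chosen with as few relations as possible. Borisov's group with few relations, or a group obtained from a universal Turing machine, would serve.

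Second, for a word $w$, form $G_0 * \langle x\rangle$ and take the quotient by relations that conjugate generators into one another using $w$. In Rabin's trick one adds relations of the form
\[
x^{-1}a_ix = \text{(word involving } [w,x]\text{)}.
\]
These are arranged so that if $w=1$, all generators die and the group is trivial. If $w\neq 1$, the group is a nontrivial HNN-extension or amalgam, so $G_0$ embeds and the quotient is nontrivial. Britton's lemma gives the nontriviality. The number of added relations depends only on $m$, not on $w$.

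Third, compress generators and relations. Use the fact that a group generated by fewer elements, for example via two-generator embeddings \`a la Higman--Neumann--Neumann, keeps the number of relations as a fixed function of $s$ and $m$. Then count, to get exactly $l_0=13$ relations; this count relies on Gordon's specific choices.

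Fourth, pass from $13$ to any $l\geq 13$ by padding each presentation with trivial relations, such as repeating a relator. This changes neither the group nor the decidability.

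The main obstacle is the nontriviality half of the second step, together with keeping the relation count bounded. One must show via normal forms that $w\neq1$ forces the Rabin group to be nontrivial, while using only a number of relations independent of $w$. I would first try the standard Miller-style construction and verify it with Britton's lemma. After that I would optimize the count, which is where the specific number $13$ comes from.
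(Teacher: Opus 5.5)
\paragraph{The constant $13$ is not established.}
The paper gives no proof of this statement; it imports it from Gordon. So the question is whether your outline delivers the statement as written. Your framework is the standard route to the Adian--Rabin theorem: a uniform Rabin-type reduction from the word problem of a fixed $G_0$, followed by padding with redundant relators. Once Step~2 is done correctly, it shows that there is \emph{some} $l_0$ such that Adian--Rabin sets with exactly $l$ relations exist for all $l\geq l_0$. That weaker form is all the paper needs, since Theorem~\ref{thm:unrecognizable_different_categories} only asserts the existence of some $l$.

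But the threshold $13$ is exactly what Gordon's theorem adds to the classical Adian--Rabin theorem, and your Step~3 does not establish it. ``Count, relying on Gordon's specific choices'' is a placeholder, not an argument. Higman--Neumann--Neumann embeddings do reduce the number of generators to two without increasing the number of relations, after Tietze eliminations. However, a Rabin-type construction then still adds several relations on top of those of the group, both to kill the generators and to make the auxiliary letters die. Moreover, explicit presentations of groups with unsolvable word problem, such as Borisov's, already have about a dozen relations. So the generic route does not obviously reach $13$. A genuinely more economical construction is required, and that is the missing idea.

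\paragraph{Step~2 as written fails.}
Suppose the only new letter is $x$ and the added relations are $x^{-1}a_ix=v_i$, with $v_i$ a word in the $a_j$ and $[w,x]$. Then $a_j\mapsto 0$, $x\mapsto 1$ defines a surjection onto $\mathbb{Z}$ that respects every relation, since each $R_j$ and $[w,x]$ has $x$-exponent sum $0$. Hence the group is never trivial, even when $w=1$: killing the $a_i$ leaves $\langle x\rangle\cong\mathbb{Z}$.

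A correct Rabin--Miller-type construction uses further new letters linked by Baumslag--Solitar-type relations such as $c^{-1}zc=z^{2}$, which force $z=1$ as soon as $c=1$. Take $c$ to be a commutator of $w$ with a new letter. When $w=1$, a chain of such collapses kills all the new letters, and relations expressing the $a_i$ through them then kill $G_0$. When $w\neq 1$, the embedding of $G_0$ follows by exhibiting the group as iterated HNN extensions and amalgamated free products and using their normal form theorems.

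This part is standard and repairable. But it is precisely these extra letters and relations that determine the final count, which is one more reason the number $13$ cannot simply be read off.
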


\let\MRhref\undefined
\bibliographystyle{hamsalpha}
\bibliography{bib.bib}

\end{document}